\newtheorem{thm}{Theorem}[section] 
\newtheorem{cor}[thm]{Corollary}
\newtheorem{lem}[thm]{Lemma}
\newtheorem{prop}[thm]{Proposition}
\theoremstyle{definition}
\newtheorem{rem}[thm]{Remark}
\newtheorem{exmpl}[thm]{Example}
\newcommand\operA[2]{{\if!#2!\operatorname{#1}\else{\operatorname{#1}_{#2}^{\phantom{I}}}\fi}} 
\newcommand{\Trace}[1][]{\if!#1!\operatorname{Tr}\else{\operatorname{Tr}_{#1}^{\phantom{I}}}\fi} 
\long\def\forget#1\forgotten{{}} %
\def\({\left(}
\def\){\right)}
\newcommand\LAY[3][]{{\begin{array}{c}\mbox{#2} \if#1!{}\else{+}\fi \\ \mbox{#3}\end{array}}}
\def\ps@pprintTitle{%
 \let\@oddhead\@empty
 \let\@evenhead\@empty
 \def\@oddfoot{}%
 \let\@evenfoot\@oddfoot}
\newcommand{\bigperp}{%
  \mathop{\mathpalette\bigp@rp\relax}%
  \displaylimits
}
\newcommand{\bigp@rp}[2]{%
  \vcenter{
    \m@th\hbox{\scalebox{\ifx#1\displaystyle2.1\else1.5\fi}{$#1\perp$}}
  }%
}
\newif\iffurther
\journal{??}
\begin{document}
\begin{frontmatter}

\title{The Cyclicity of Tensor Products of Cyclic $p$-Algebras}

\author{Adam Chapman}
\address{School of Computer Science, Academic College of Tel-Aviv-Yaffo, Rabenu Yeruham St., P.O.B 8401 Yaffo, 6818211, Israel}
\ead{adam1chapman@yahoo.com}

\begin{abstract}
We revisit the famous theorem of Albert's on the cyclicity of tensor products of cyclic $p$-algebras. In the case of tensor products of cyclic $p$-algebras of prime degree, we provide an explicit computation of the resulting cyclic algebra in symbol algebra terms.
\end{abstract}

\begin{keyword}
Cyclic Algebras; Fields of Positive Characteristic; Witt Vectors; Central Simple Algebras
\MSC[2020] 16K20 (primary); 13F35 (secondary)
\end{keyword}
\end{frontmatter}

\section{Introduction}

It is known (\cite[Page 109, Lemma 13]{Albert:1968}) that any tensor product of cyclic $p$-algebras is cyclic. 
The original proof focuses on the existence of a purely inseparable simple extension of the center of appropriate degree, but does not provide an explicit formula for the resulting cyclic algebra.

In this note, we provide a computational method for describing the resulting cyclic algebra in terms of symbol presentation. The trick is an extension of an observation from \cite{ChapmanQueguiner:2024} on symbol presentation with fewest possible independent entries.

\section{Central Simple Algebras}

Recall that a central simple algebra over $F$ is a simple algebra $A$ whose center is $F$ and its dimension over $F$ is finite. Such algebras are always of dimension $d^2$ for some natural number $d$ called the degree of $A$.
Every such algebra decomposes as $D \otimes_F M_k(F)$ where $D$ is a division algebra with $Z(D)=F$. Since $D$ is also a central simple $F$-algebra, this gives rise to the notion of Brauer equivalence: the class of $A$ is represented by $D$.
The split case is the equivalence class of $F$ itself (which includes all matrix algebras over $F$).
The index of $A$ is the degree of $D$.
The set of Brauer classes forms a group, called the Brauer group, with $\otimes_F$ as the group binary operation.
This group is a torsion group. The order of each $A$ is called its exponent (or order or period).
The order always divides the index which in term divides the degree. See \cite{GilleSzamuely:2006}.

\section{Cyclic $p$-Algebras}

A $p$-algebra is a central simple algebra of degree $p^m$ over a field $F$ of $\operatorname{char}(F)=p>0$ where $m$ is some natural number. Such an algebra is cyclic if it contains a cyclic field (or \'{e}tale) extension of the center of degree $p^m$. These algebras are known to generate the $p^m$ torsion of the Brauer group of $F$ (see \cite[Section 9]{GilleSzamuely:2006} or \cite[Chapter 7]{Albert:1968}).

Every such cyclic algebra admits a symbol presentation $[\omega,\beta)_{p^m,F}$, which means the algebra is generated over $F$ by $u_1,\dots,u_m$ and $y$ satisfying $(u_1^p,\dots,u_m^p)-(u_1,\dots,u_m)=\omega$, $y^{p^m}=\beta$ and $(y u_1 y^{-1},\dots,y u_m y^{-1})=(u_1,\dots,u_m)+(1,0,\dots,0)$, where $\omega \in W_m F$ ($W_m F$ is the ring of truncated Witt vectors of length $m$ over $F$), $\beta \in F^\times$ and addition and subtraction follow the formulas of Witt vectors. See for instance \cite{MammoneMerkurjev:1991}.

These symbol presentations obey certain identities.
In particular, 
\begin{itemize}
\item $[(\beta,0,\dots,0),\beta)_{p^m,F}=0$, 
\item $[\omega,\beta)_{p^m,F}=[\omega+(\tau_1^p,\dots,\tau_m^p)-(\tau_1,\dots,\tau_m),\beta)_{p^m,F}$ for any $(\tau_1,\dots,\tau_m) \in W_m F$,
\item $[\omega,\beta)_{p^m,F}=[\omega,\pi \beta)_{p^m,F}$ for any norm $\pi$ in the field extension $F[u_1,\dots,u_m]/F$,
\item $[\omega,\beta)_{p^m,F}=[\omega,\beta^p)_{p^{m+1},F}=[(0,\omega),\beta)_{p^{m+1},F}$,
\item $[\omega,\beta)_{p^m,F} \otimes [\tau,\beta)_{p^m,F}=[\omega+\tau,\beta)_{p^m,F}$,
\item  and $[\omega,\beta)_{p^m,F} \otimes [\omega,\gamma)_{p^m,F}=[\omega,\beta \gamma)_{p^m,F}$. 
\end{itemize}
Note that the equality sign here stands for Brauer equivalence and $\otimes$ is always over $F$.

\begin{prop}
For any $\omega \in W_m F$, $\beta \in F^\times$ and $x\in F$, unless $\beta+x^{p^m}=0$, we have $[\omega,\beta)_{p^m,F}=[\omega \cdot (1+x^{p^m}\beta^{-1},0,\dots,0)\cdot \pi,\beta+x^{p^m})_{p^m,F}$ where $\pi$ is the unique element in $W_m F$ satisfying $\pi \cdot d(\beta+x^{p^m},0,\dots)=d(\beta,0,\dots,0)$ in $W_m \Omega_F^1$.
\end{prop}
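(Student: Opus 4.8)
\emph{Strategy.} The plan is to lift the asserted identity of Brauer classes to an identity of logarithmic differentials in $W_m\Omega_F^1$, where it reduces to a one-line manipulation. Write $[a]=(a,0,\dots,0)\in W_mF$ for the Teichm\"uller representative of $a\in F$, and $d$ for the differential of the de Rham--Witt complex $W_m\Omega_F^\bullet$. The input I would use is Kato's presentation of the $p^m$-torsion of the Brauer group of a field of characteristic $p$ (see \cite{MammoneMerkurjev:1991}; this is also the viewpoint behind the observation of \cite{ChapmanQueguiner:2024} that the present note extends): there is a homomorphism $\rho\colon W_m\Omega_F^1\to\Brp[p^m]F$ sending the logarithmic differential $\omega\cdot[\beta]^{-1}d[\beta]$ to the symbol $[\omega,\beta)_{p^m,F}$, for all $\omega\in W_mF$ and $\beta\in F^\times$. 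Granting this, it suffices to establish, in $W_m\Omega_F^1$, the identity
\[
\omega\cdot[\beta]^{-1}d[\beta]\;=\;\bigl(\omega\cdot(1+x^{p^m}\beta^{-1},0,\dots,0)\cdot\pi\bigr)\cdot[\,\beta+x^{p^m}\,]^{-1}\,d[\,\beta+x^{p^m}\,],
\]
since applying $\rho$ to it then yields the equality of the two symbols.

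\emph{The computation.} Put $\gamma=\beta+x^{p^m}$, so that $1+x^{p^m}\beta^{-1}=\gamma\beta^{-1}$. Multiplicativity of the Teichm\"uller lift gives $(1+x^{p^m}\beta^{-1},0,\dots,0)=[\gamma\beta^{-1}]=[\gamma]\,[\beta]^{-1}$, and $[\gamma]$ is a unit of $W_mF$ because $\gamma\in F^\times$. Using that $W_mF$ is commutative, the right-hand side of the displayed identity becomes
\[
\omega\,[\gamma]\,[\beta]^{-1}\,\pi\,[\gamma]^{-1}\,d[\gamma]\;=\;\omega\,[\beta]^{-1}\,\pi\,d[\gamma],
\]
and the defining relation $\pi\,d[\gamma]=d[\beta]$ collapses this to $\omega\,[\beta]^{-1}d[\beta]$, which is the left-hand side. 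This proves the identity, hence the proposition. For $m=1$ the whole argument is transparent: $W_1\Omega_F^1=\Omega_F^1$ and $d(x^p)=0$, so $d[\gamma]=d\gamma=d\beta=d[\beta]$, hence $\pi=1$, and the statement is the elementary equality $\omega\,d\log\beta=(\omega\gamma\beta^{-1})\,d\log\gamma$.

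\emph{Where the real work lies.} Two ingredients carry the weight. First is the precise shape of Kato's map $\rho$: the elementary symbol identities recorded before the proposition do not by themselves visibly give the result, since its real content is encoded in the de Rham--Witt relations (notably $d(x^{p^m})=0$ and the Leibniz rule) that lie behind both the definition of $\pi$ and the map $\rho$. Second, one must justify that $\pi$ is well defined, i.e. that $d[\beta]$ generates the same cyclic $W_mF$-submodule of $W_m\Omega_F^1$ as $d[\gamma]$, with a unique multiplier; I would isolate this as a lemma, analysing $d[\beta]-d[\gamma]$ via the Witt addition polynomials, $[\gamma]=[\beta]+[x^{p^m}]-\sum_{i\geq 1}V^i[c_i]$ (the Teichm\"uller term $d([x]^{p^m})=p^m[x]^{p^m-1}d[x]$ drops out since $p^m$ annihilates $W_m\Omega_F^1$), together with the structural identities of the de Rham--Witt complex. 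I expect this bookkeeping with the correction terms $dV^i[c_i]$, rather than the short computation above, to be the only genuinely delicate step.
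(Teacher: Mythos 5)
Your proposal is correct and follows essentially the same route as the paper: both pass to the epimorphism $W_m\Omega^1_F \twoheadrightarrow {}_{p^m}\mathrm{Br}(F)$ sending $\omega\cdot(\beta^{-1},0,\dots,0)\,d(\beta,0,\dots,0)$ to $[\omega,\beta)_{p^m,F}$ (the paper cites Aravire--Jacob--O'Ryan for this where you invoke Kato's presentation, but it is the same map), and then verify the identity in $W_m\Omega^1_F$ by inserting $[\gamma][\gamma]^{-1}$ and using the defining relation $\pi\,d[\gamma]=d[\beta]$. Your additional remarks on the well-definedness of $\pi$ and the $m=1$ case go beyond what the paper records but do not change the argument.
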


\begin{proof}
From \cite[Theorem 2.27]{AravireJacobORyan:2018} we obtain an epimorphism from $W_m \Omega^1_F$ to the $p^m$ torsion of $Br(F)$ given by mapping $\omega \cdot (\beta^{-1},0,\dots,0) d(\beta,0,\dots,0)$ to $[\omega,\beta)_{p^m,F}$.
The statement then follows from the equality 
$\omega \cdot (\beta^{-1},0,\dots,0) d(\beta,0,\dots,0)=\omega \cdot (\beta^{-1},0,\dots,0) \cdot (\beta+x^{p^m},0,\dots,0) \cdot ((\beta+x^{p^m})^{-1},0,\dots,0) \cdot \pi d(\beta+x^{p^m},0,\dots,0).$
\end{proof}

\begin{rem}
This $\pi$ is a Witt vector that starts with 1, and therefore, when $m=1$, it can be erased.
However, when $m>1$, it is not necessarily the vector $(1,0,\dots,0)$. For example, when $p=m=2$, $d((\beta,0)+(x^4,0))=d(\beta,0)+d(x^4,0)=d(\beta,0)$ on the one hand, and on the other hand we have $d((\beta,0)+(x^4,0))=d(\beta+x^4,\beta x^4)=d(\beta+x^4,0)+d(0,\beta,x^4)$.
Now, $d(0,\beta x^4)=d((\beta,0)\cdot(0,x^4))=(0,x^4)\cdot d(\beta,0)+(\beta,0)\cdot d(0,x^4)=(0,x^4)\cdot d(\beta,0)$.
Consequently, $d(\beta,0)=d(\beta+x^4,0)+(0,x^4)d(\beta,0)$, and so $(1,x^4) d(\beta,0)=d(\beta+x^4)$, and $\pi$ is the inverse of $(1,x^4)$.
\end{rem}
The special case of $m=1$ recovers the identity $[\alpha,\beta)_{p,F}=[\alpha(\beta+x^p)\beta^{-1},\beta+x^p)_{p,F}$ appearing in \cite[Page 119, Equation (2)]{ChapmanQueguiner:2024}.

The following proposition extends \cite[Lemma 2.2]{ChapmanQueguiner:2024}
\begin{lem}\label{neat}
For any two cyclic algebras $[\omega,\beta)_{p^m,F}$ and $[\alpha,\gamma)_{p,F}$ of degree $p^m$ and $p$ respectively, there exist $\tau \in W_m F$ and $\delta \in F^\times$ for which $[\omega,\beta)_{p^m,F}=[\tau,\delta)_{p^m,F}$ and $[\alpha,\gamma)_{p,F}=[\delta,\gamma)_{p,F}$.
\end{lem}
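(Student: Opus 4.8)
The plan is to produce the common slot $\delta$ by an explicit formula and then feed it into the Proposition (to rewrite the degree-$p^m$ algebra) and into the Artin--Schreier--Witt relation $[\alpha,\gamma)_{p,F}=[\alpha+\wp(c),\gamma)_{p,F}$ coming from the second bulleted identity, where $\wp(z):=z^p-z$ (to rewrite the degree-$p$ algebra). Concretely, I would take
\[
  \delta \;:=\; \beta+(\alpha-\beta)^{p^m}.
\]
The virtue of this choice is twofold: on one hand $\delta-\beta=(\alpha-\beta)^{p^m}$ is a $p^m$-th power outright; on the other hand $\delta-\alpha=(\alpha-\beta)^{p^m}-(\alpha-\beta)$, which equals $\wp^{(m)}(\alpha-\beta)$ where $\wp^{(m)}(z):=z^{p^m}-z$, and the one-line telescoping identity $\wp^{(m)}(z)=\wp\!\big(z+z^p+\cdots+z^{p^{m-1}}\big)$ displays $\delta-\alpha$ as a member of $\wp(F)$.

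With $\delta$ in hand, the two halves are immediate. For the first algebra I apply the Proposition with $x:=\alpha-\beta$, so that $\beta+x^{p^m}=\delta$; it yields $[\omega,\beta)_{p^m,F}=[\tau,\delta)_{p^m,F}$ with $\tau=\omega\cdot(1+x^{p^m}\beta^{-1},0,\dots,0)\cdot\pi\in W_mF$. For the second algebra I apply the Artin--Schreier--Witt relation with $c:=(\alpha-\beta)+(\alpha-\beta)^p+\cdots+(\alpha-\beta)^{p^{m-1}}$: since $\delta=\alpha+\wp(c)$ by the paragraph above, $[\alpha,\gamma)_{p,F}=[\delta,\gamma)_{p,F}$. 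Together these give the $\tau$ and $\delta$ the lemma asks for.

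The sole hypothesis needing attention is that the Proposition requires $\beta+x^{p^m}=\delta\neq0$. This is cheap to arrange: replacing $\alpha$ by $\alpha+\wp(s)$ leaves $[\alpha,\gamma)_{p,F}$ untouched while adding $\wp(s)^{p^m}$ to $\delta$, and since $\wp(s)^{p^m}$ is nonzero for some $s\in F$ unless $\wp(F)=0$ --- i.e.\ unless $F=\F_p$, where $\Br(F)=0$ and the assertion is vacuous --- we may assume $\delta\neq0$. (Alternatively, $\delta=0$ forces $\beta$ to be a $p^m$-th power, hence a norm from $F[u_1,\dots,u_m]/F$, so that $[\omega,\beta)_{p^m,F}$ is split, and also forces $\alpha\in\wp(F)$, so that $[\alpha,\gamma)_{p,F}$ is split; then any $\delta'\in\wp(F)$ with $\delta'\neq0$ and $\tau'=0$ work.) I anticipate no conceptual obstacle beyond this degenerate bookkeeping: the substance is carried entirely by the Proposition and the two symbol identities, and the one genuinely new input is the formula for $\delta$, the degree-$p^m$ incarnation of the ``fewest independent entries'' manipulation of \cite{ChapmanQueguiner:2024}.
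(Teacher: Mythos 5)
Your proposal is correct and follows essentially the same route as the paper: take $x=\alpha-\beta$, set $\delta=\beta+x^{p^m}$, rewrite the degree-$p^m$ symbol via the Proposition and the degree-$p$ symbol via the Artin--Schreier--Witt relation using the telescoping identity $x^{p^m}-x=\wp(x+x^p+\cdots+x^{p^{m-1}})$. The only difference is that you explicitly dispose of the degenerate case $\delta=0$, which the paper's one-line proof passes over in silence.
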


\begin{proof}
The equation $\alpha+x^{p^m}-x=\beta+x^{p^m}$ is actually a linear equation that has a unique solution $x=\alpha-\beta$. Take this $x$, and set $\delta=\beta+x^{p^m}$ and $\tau=\omega \cdot (1+x^{p^m}\beta^{-1},0,\dots,0)\cdot \pi$, and the statement readily follows.
\end{proof}

\section{The cyclicity of tensor products}

We start with a theorem on the tensor product of a cyclic algebra of degree $p^m$ and a cyclic algebra of degree $p$.

\begin{thm}\label{main}
Every tensor product of a cyclic algebra of degree $p^m$ and a cyclic algebra of degree $p$ over a field $F$ of $\operatorname{char}(F)=p>0$ is cyclic of degree $p^{m+1}$.
\end{thm}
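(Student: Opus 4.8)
The plan is to normalise the presentation of the two factors with \Lref{neat}, and then to build a length-$(m+1)$ Artin--Schreier--Witt system inside the tensor product that makes the cyclic structure of degree $p^{m+1}$ explicit.

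First I would invoke \Lref{neat}. Writing the given algebra as $[\omega,\beta)_{p^m,F}\otimes[\alpha,\gamma)_{p,F}$, there are $\tau\in W_mF$ and $\delta\in F^\times$ with $[\omega,\beta)_{p^m,F}=[\tau,\delta)_{p^m,F}$ and $[\alpha,\gamma)_{p,F}=[\delta,\gamma)_{p,F}$; since tensoring over $F$ multiplies dimensions, $A:=[\tau,\delta)_{p^m,F}\otimes[\delta,\gamma)_{p,F}$ has degree $p^{m+1}$ automatically, and it remains to write $A$ as a single symbol $[\sigma,\epsilon)_{p^{m+1},F}$. Fix generators $u_1,\dots,u_m,y$ of the first factor with $(u_1^p,\dots,u_m^p)-(u_1,\dots,u_m)=\tau$, $y^{p^m}=\delta$ and $y(u_1,\dots,u_m)y^{-1}=(u_1,\dots,u_m)+(1,0,\dots,0)$, and generators $v,w$ of the second with $v^p-v=\delta$, $w^p=\gamma$, $wvw^{-1}=v+1$; inside $A$ these two sets of generators commute.

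The point that makes the whole argument go is the coincidence $y^{p^m}=\delta=v^p-v$. Setting $v':=v-\sum_{i=0}^{m-1}y^{p^i}$ one computes $(v')^p-v'=y$, $(v')^{p^m}=v$ and $wv'w^{-1}=v'+1$; in particular $v'$ turns the purely inseparable element $y$ into an Artin--Schreier value over $F(y)$ that is still compatible with the action of $w$. Using this, I would form a length-$(m+1)$ Witt vector $\Xi=(\xi_0,\xi_1,\dots,\xi_m)\in W_{m+1}(A)$ whose first $m$ entries are corrections of $u_1,\dots,u_m$ (by polynomials in $v$) and whose last entry is a correction of $v'$, chosen so that $\sigma:=(\xi_0^p,\dots,\xi_m^p)-(\xi_0,\dots,\xi_m)$ lands in $W_{m+1}F$; the existence of the required corrections inside $A$ is again forced by $y^{p^m}=v^p-v$. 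Combined with a suitable norm element $Y$ built from $y$, $w$ (and $u_1,\dots,u_m$) with $Y^{p^{m+1}}=:\epsilon\in F^\times$, and with $Y$ conjugating $\Xi$ by the shift $\Xi+(1,0,\dots,0)$, this exhibits an isomorphism $A\cong[\sigma,\epsilon)_{p^{m+1},F}$, with $\epsilon$ a monomial in $\delta$ and $\gamma$ and $\sigma$ obtained from $\tau$ and $\delta$ by explicit Witt-vector operations. Hence $A$ is cyclic of degree $p^{m+1}$.

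The main obstacle is precisely the simultaneous choice of the corrections: the higher coordinates of $(\xi_0^p,\dots,\xi_m^p)-(\xi_0,\dots,\xi_m)$, the carry produced by adding $(1,0,\dots,0)$ to $\Xi$, and the way $Y$ must act on each coordinate all involve universal Witt polynomials, and one has to see that they can be reconciled so that simultaneously $\sigma\in W_{m+1}F$ and $Y\Xi Y^{-1}=\Xi+(1,0,\dots,0)$. An alternative that sidesteps this bookkeeping is to compute directly in $W_{m+1}\Omega^1_F$ via the epimorphism onto the $p^{m+1}$-torsion of $\operatorname{Br}(F)$ from \cite{AravireJacobORyan:2018} used in the Proposition: the classes of $[\tau,\delta)_{p^m,F}$ and $[\delta,\gamma)_{p,F}$ lift to $(0,\tau)\cdot(\delta^{-1},0,\dots,0)\,d(\delta,0,\dots,0)$ and $(0,\dots,0,\delta)\cdot(\gamma^{-1},0,\dots,0)\,d(\gamma,0,\dots,0)$ respectively, and after adding them and simplifying with the projection formula for the Verschiebung together with the relation $\delta=N_{F(v)/F}(v)$ one reads off $\sigma$ and $\epsilon$ from a single class $\sigma\cdot(\epsilon^{-1},0,\dots,0)\,d(\epsilon,0,\dots,0)$; here it is the arithmetic of the de Rham--Witt operators $F$, $V$ and $d$ that must be controlled. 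Finally, the degenerate configurations (for instance $\delta$ or $\gamma$ a $p$-th power, so that the relevant \'{e}tale algebras fail to be fields) need no separate treatment, since every step above is an equality of Brauer classes.
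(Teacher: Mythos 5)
Your opening move --- normalising both factors with Lemma \ref{neat} and noting that the tensor product already has degree $p^{m+1}$, so that it suffices to identify its Brauer class with that of a single degree-$p^{m+1}$ symbol --- matches the paper, and your observation that $v':=v-\sum_{i=0}^{m-1}y^{p^i}$ satisfies $(v')^p-v'=y$, $(v')^{p^m}=v$ and $wv'w^{-1}=v'+1$ is correct. But the heart of your first argument, namely the construction of the Witt vector $\Xi\in W_{m+1}(A)$ and of the element $Y$ with $Y^{p^{m+1}}\in F^\times$ and $Y\Xi Y^{-1}=\Xi+(1,0,\dots,0)$, is never carried out: you yourself flag the reconciliation of the corrections, the carries, and the action of $Y$ as ``the main obstacle,'' and nothing in the proposal resolves it. As written this is a plan whose decisive step is missing, not a proof. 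Your fallback via $W_{m+1}\Omega^1_F$ is closer to something that works, but it likewise stops at ``the arithmetic of $F$, $V$ and $d$ must be controlled'' without doing that arithmetic.

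The gap can be closed far more cheaply, without ever touching generators, using only the symbol identities listed in Section 3 at the level of Brauer classes. Since $\gamma^{p^m}$ is a norm from the degree-$p^m$ \'{e}tale extension attached to $\tau$, one has $[\tau,\delta)_{p^m,F}=[\tau,\delta\gamma^{p^m})_{p^m,F}=[(0,\tau),\delta\gamma^{p^m})_{p^{m+1},F}$; on the other side $[\delta,\gamma)_{p,F}=[(\delta,0,\dots,0),\gamma^{p^m})_{p^{m+1},F}$, and tensoring with the split symbol $[(\delta,0,\dots,0),\delta)_{p^{m+1},F}$ turns this into $[(\delta,0,\dots,0),\delta\gamma^{p^m})_{p^{m+1},F}$. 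The two factors now share the second slot $\delta\gamma^{p^m}$, so additivity in the first slot yields $[(\delta,\tau),\delta\gamma^{p^m})_{p^{m+1},F}$, and the degree count you already made upgrades Brauer equivalence to isomorphism. I would recast your argument in this form; if you want the explicit $\Xi$ and $Y$, extract them afterwards from this identity rather than trying to guess them inside $A$ directly.
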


\begin{proof}
By Lemma \ref{neat}, the algebras can be written as $[\tau,\delta)_{p^m,F}$ and $[\delta,\gamma)_{p,F}$.
Now, $[\tau,\delta)_{p^m,F}=[\tau,\delta \gamma^{p^m})_{p^m,F}=[(0,\tau),\delta\gamma^{p^m})_{p^{m+1},F}$ and $[\delta,\gamma)_{p,F}=[(\delta,0,\dots,0),\gamma^{p^m})_{p^{m+1},F}=[(\delta,0,\dots,0),\gamma^{p^m})_{p^{m+1},F}+[(\delta,0,\dots,0),\delta)_{p^{m+1},F}=[(\delta,0,\dots,0),\delta\gamma^{p^m})_{p^{m+1},F}$.
Therefore, $[\tau,\delta)_{p^m,F} \otimes [\delta,\gamma)_{p,F}=[(0,\tau),\delta\gamma^{p^m})_{p^{m+1},F} \otimes [(\delta,0,\dots,0),\delta\gamma^{p^m})_{p^{m+1},F}=[(\delta,\tau),\delta\gamma^{p^m})_{p^{m+1},F}$.
\end{proof}

In short, we obtained the following formula
\begin{eqnarray}
[\tau,\delta)_{p^m,F} \otimes [\delta,\gamma)_{p,F}=[(\delta,\tau),\delta \gamma^{p^m})_{p^{m+1},F}.
\end{eqnarray}

\begin{cor}\label{prime}
Any tensor product of cyclic $p$-algebras of prime degree $p$ is a cyclic algebra.
\end{cor}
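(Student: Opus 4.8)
The plan is a short induction on the number $n$ of tensor factors, with \Tref{main} doing all the work. Write the product as $A_1 \otimes_F \cdots \otimes_F A_n$ with each $A_i=[\omega_i,\beta_i)_{p,F}$ cyclic of degree $p$. I would prove by induction on $k$ that $A_1\otimes_F\cdots\otimes_F A_k$ is Brauer equivalent to a symbol algebra $[\sigma_k,\delta_k)_{p^k,F}$ of degree $p^k$, and hence --- since it has degree $p^k$ and so does the symbol algebra --- actually isomorphic to it; in particular it is cyclic. The base case $k=1$ is just $A_1=[\omega_1,\beta_1)_{p,F}$.

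For the inductive step, assume $A_1\otimes_F\cdots\otimes_F A_k=[\sigma_k,\delta_k)_{p^k,F}$. Then $A_1\otimes_F\cdots\otimes_F A_{k+1}$ is the tensor product of a cyclic algebra of degree $p^k$ and the cyclic algebra $A_{k+1}$ of degree $p$, so \Tref{main} applies verbatim and yields that it is cyclic of degree $p^{k+1}$. Concretely, one first invokes \Lref{neat} to replace the two factors by $[\sigma_k',\delta_{k+1})_{p^k,F}$ and $[\delta_{k+1},\beta_{k+1})_{p,F}$ sharing the slot $\delta_{k+1}$, and then the formula $[\tau,\delta)_{p^m,F}\otimes[\delta,\gamma)_{p,F}=[(\delta,\tau),\delta\gamma^{p^m})_{p^{m+1},F}$ established in the proof of \Tref{main} gives $A_1\otimes_F\cdots\otimes_F A_{k+1}=[(\delta_{k+1},\sigma_k'),\delta_{k+1}\beta_{k+1}^{p^k})_{p^{k+1},F}$. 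Taking $k=n$ finishes the proof, and in fact records an explicit recursive symbol presentation for the whole tensor product.

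I expect no real obstacle. The one thing to be careful about is bookkeeping: \Tref{main} requires the degree-$p^k$ factor and the degree-$p$ factor to share a common slot, which is precisely what \Lref{neat} manufactures, so the induction must alternate an application of \Lref{neat} with an application of the formula above, carrying the Witt-vector datum $\sigma_k \in W_k F$ along through these rewritings. One should also keep in mind that although the resulting symbol algebra has degree $p^n$, its index may be a proper divisor of $p^n$; the statement ``is a cyclic algebra'' is nonetheless literal, since equal degrees upgrade the Brauer equivalences coming from \Tref{main} to genuine isomorphisms.
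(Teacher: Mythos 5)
Your proposal is correct and is essentially the paper's own argument: the paper proves \cref{prime} by exactly this induction on the number of tensor factors, applying \cref{main} (which itself combines \cref{neat} with the displayed symbol formula) at each step. Your additional bookkeeping of the explicit recursive symbol presentation and the remark that equal degrees upgrade Brauer equivalence to isomorphism are consistent with, and slightly more detailed than, what the paper records.
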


\begin{proof}
It follows by induction readily from Theorem \ref{main}.
\end{proof}

\begin{exmpl}
Given two cyclic $p$-algebras $[\omega,\beta)_{p,F}$ and $[\alpha,\gamma)_{p,F}$, we can write them as $[\tau,\delta)_{p,F}$ and $[\delta,\gamma)_{p,F}$ where $x=\alpha-\beta$, $\delta=\beta+x^p$ and $\tau=\omega \delta \beta^{-1}$. And then
$$[\omega,\beta)_{p,F} \otimes [\alpha,\gamma)_{p,F}=[(\delta,\tau),\delta \gamma^p)_{p^2,F}.$$
\end{exmpl}

\section{More general case}

In the more general case of tensor products of $p$-algebras of arbitrary degrees (not necessarily prime), we can provide a computational method for finding a cyclic representative of the Brauer class.

\begin{thm}[{\cite[Chapter 7, Theorem 31]{Albert:1968}}]
If $F$ is a field of $\operatorname{char}(F)=p>0$ and $A$ is a tensor product of cyclic $p$-algebras over $F$, then $A$ is Brauer equivalent to a cyclic $p$-algebra.
\end{thm}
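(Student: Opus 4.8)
The plan is to induct on the number of tensor factors and on the maximal degree $p^{m}$ appearing among them, reducing the general case to repeated applications of the degree-$p$ result already established in Theorem~\ref{main}. First I would fix a decomposition $A = A_1 \otimes_F \cdots \otimes_F A_r$ into cyclic $p$-algebras $A_i = [\omega_i, \beta_i)_{p^{m_i}, F}$, and argue that it suffices to handle the case where all but possibly one factor have degree exactly $p$. Indeed, any cyclic algebra of degree $p^{m}$ with $m \geq 2$ can be ``peeled'': using the identities in \Sref{Cyclic $p$-Algebras}, a symbol $[\omega,\beta)_{p^m,F}$ with $\omega = (\omega_0, \dots, \omega_{m-1})$ is Brauer equivalent to a product $[(\omega_0,0,\dots,0),\beta)_{p^m,F} \otimes [(0,\omega_1,\dots,\omega_{m-1}),\beta)_{p^m,F}$, and the second factor is $[(\omega_1,\dots,\omega_{m-1}),\beta)_{p^{m-1},F}$ of strictly smaller degree while the first, having a Witt vector supported only in the leading coordinate, is itself Brauer equivalent to the degree-$p$ symbol $[\omega_0,\beta)_{p,F}$ raised appropriately — more carefully, $[(\omega_0,0,\dots,0),\beta)_{p^m,F} = [\omega_0,\beta^{p^{m-1}})_{p,F}$ is a degree-$p$ algebra over $F$. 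Iterating, each factor of degree $p^{m_i}$ splits as a tensor product of $m_i$ cyclic algebras of degree $p$, so without loss of generality $A$ is a tensor product of finitely many cyclic algebras, each of degree $p$.

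With that reduction in hand, the heart of the argument is a second induction, this time on the number $r$ of degree-$p$ factors, showing that $A_1 \otimes_F \cdots \otimes_F A_r$ is Brauer equivalent to a single cyclic $p$-algebra. The base case $r = 1$ is trivial and $r = 2$ is the content of \Tref{main} (with $m = 1$), producing a cyclic algebra of degree $p^2$. For the inductive step, suppose $B := A_1 \otimes_F \cdots \otimes_F A_{r-1}$ is Brauer equivalent to a cyclic algebra $C = [\tau,\delta)_{p^{m},F}$ of degree $p^{m}$ for some $m$ (with $m \le r-1$); then $A$ is Brauer equivalent to $C \otimes_F A_r$, which is a tensor product of a cyclic algebra of degree $p^{m}$ and a cyclic algebra of degree $p$, hence cyclic of degree $p^{m+1}$ by \Tref{main}. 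This closes the induction and yields that $A$ is Brauer equivalent to a cyclic $p$-algebra, with an explicit symbol presentation obtained by unwinding \Lref{neat} and formula~(1) at each stage.

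The step I expect to be the main obstacle is the first reduction — justifying that a cyclic algebra of degree $p^{m}$ genuinely decomposes, up to Brauer equivalence, into a tensor product of $m$ cyclic algebras of degree $p$ over the \emph{same} base field $F$, purely via the listed Witt-vector symbol identities. The identities $[\omega,\beta)_{p^m,F} = [(0,\omega),\beta)_{p^{m+1},F}$ and $[\omega,\beta)_{p^m,F}\otimes[\tau,\beta)_{p^m,F} = [\omega+\tau,\beta)_{p^m,F}$ go in the right direction, but one must be careful that the ``leading-coordinate'' piece $[(\omega_0,0,\dots,0),\beta)_{p^m,F}$ is handled correctly: its exponent divides $p$, yet it sits inside a degree-$p^m$ symbol, so identifying it with an honest degree-$p$ symbol $[\omega_0, \beta^{p^{m-1}})_{p,F}$ requires the identity $[\omega,\beta)_{p^m,F} = [\omega,\beta^{p})_{p^{m+1},F}$ read backwards together with the observation that a Witt vector supported in the first coordinate corresponds under Artin–Schreier–Witt theory to a degree-$p$ extension. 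Once this bookkeeping is made precise, everything else is a mechanical iteration of \Tref{main}, and the resulting cyclic representative can in principle be written down explicitly coordinate by coordinate.
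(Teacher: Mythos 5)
Your second induction (iterating \Tref{main} across degree-$p$ factors one at a time) is fine and is exactly \Cref{prime}, but the reduction it rests on --- that every cyclic algebra of degree $p^m$ is, up to Brauer equivalence, a tensor product of $m$ cyclic algebras of degree $p$ --- is false, and this is a fatal gap rather than mere bookkeeping. A tensor product of cyclic algebras of degree $p$ has exponent dividing $p$, whereas a cyclic algebra of degree $p^m$ generically has exponent $p^m$ (take a division algebra of degree and exponent $p^m$); no amount of symbol manipulation can bridge that. Concretely, your peeling step goes wrong at the leading coordinate: the additive decomposition $\omega=(\omega_0,0,\dots,0)+(0,\omega_1,\dots,\omega_{m-1})$ in $W_mF$ is valid, and the second piece does drop to $[(\omega_1,\dots,\omega_{m-1}),\beta)_{p^{m-1},F}$, but the Teichm\"uller vector $(\omega_0,0,\dots,0)$ has additive order $p^m$ in $W_mF$ for generic $\omega_0$ (since $p\cdot(\omega_0,0,\dots,0)=(0,\omega_0^p,0,\dots,0)\neq 0$); it is the \emph{last} coordinate, $V^{m-1}(\omega_0)=(0,\dots,0,\omega_0)$, that is $p$-torsion and corresponds to a degree-$p$ symbol. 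Accordingly $[(\omega_0,0,\dots,0),\beta)_{p^m,F}$ has exponent $p^m$ in general and cannot equal $[\omega_0,\beta^{p^{m-1}})_{p,F}$; the identity $[\omega,\beta)_{p^m,F}=[\omega,\beta^p)_{p^{m+1},F}$ can only be read backwards when $\beta$ is a $p$-th power, which it is not in general. So your argument already fails for a single factor of degree $p^2$ and exponent $p^2$.

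The paper takes a genuinely different route that circumvents this obstruction: it inducts on the \emph{exponent} $p^m$ of $A$. Since $A^{\otimes p}$ has exponent $p^{m-1}$, induction gives a cyclic representative $[\omega,\beta)_{p^t,F}$ of its class; then $B=A\otimes[-(\omega,0),\beta)_{p^{t+1},F}$ has exponent dividing $p$, and the classical Albert theorem (that every exponent-$p$ algebra in characteristic $p$ is Brauer equivalent to a tensor product of cyclic algebras of degree $p$) applies to $B$. Writing $A$ as $[(\omega,0),\beta)_{p^{t+1},F}\otimes B$ and absorbing the degree-$p$ factors of $B$ one at a time via \Tref{main} then finishes. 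Note that this extra classical input is unavoidable in some form: the theorem's hypothesis hands you factors of arbitrary degree $p^{m_i}$, and only the exponent-$p$ decomposition theorem lets you reach a situation where \Tref{main} and \Cref{prime} can be applied.
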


\begin{proof}
Suppose $\exp A=p^m$. We do it by induction on $m$.
For $m=1$, $A$ is Brauer equivalent to a tensor product of cyclic algebras of degree $p$, which by Corollary \ref{prime} is cyclic.
Suppose this holds true for $m-1$.
Then consider the Brauer class of $A^{\otimes p}$.
It is of exponent $p^{m-1}$, and so it is represented by a cyclic algebra $[\omega,\beta)_{p^t,F}$ for some $t$.
Therefore, $B=A \otimes [-(\omega,0),\beta)_{p^{t+1},F}$ is of exponent dividing $p$. Hence, $B$ is Brauer equivalent to a tensor product of cyclic algebras of degree $p$. Consequently, $A$, which is Brauer equivalent to $[\omega,\beta)_{p,F} \otimes B$, is by Corollary \ref{prime} Brauer equivalent to a cyclic algebra.
\end{proof}

The proof of this theorem can be made computational, because at each step we apply Corollary \ref{prime} which is based on Theorem \ref{main} that gives an explicit formula.
\bibliographystyle{abbrv}
\bibliography{bibfile}
\end{document}